\theoremstyle{plain}
\newtheorem{theorem}{Theorem}
\newtheorem{corollary}[theorem]{Corollary}
\newtheorem{proposition}[theorem]{Proposition}
\theoremstyle{definition}
\newtheorem{definition}[theorem]{Definition}
\newtheorem{example}[theorem]{Example}
\theoremstyle{remark}
\newtheorem{remark}[theorem]{Remark}
\newcommand{\ellZ}{\ell_{\text{\upshape Z}}}
\newcommand{\norm}[1]{\lVert #1 \rVert}
\newcommand{\Norm}[1]{\left\lVert #1 \right\rVert}
\newcommand{\abs}[1]{\lvert #1 \rvert}
\DeclareMathOperator{\diag}{diag}
\DeclareMathOperator{\real}{Re}
\DeclareMathOperator{\imag}{Im}
\journal{Systems \& Control Letters}
\begin{document}

\begin{frontmatter}



\title{Efficient method for computing lower bounds on the $p$-radius of switched linear systems}


\author[upenn]{Masaki Ogura\corref{cor}}
\ead{ogura@seas.upenn.edu}

\author[upenn]{Victor M. Preciado}
\ead{preciado@seas.upenn.edu}

\author[ucl,fnrs]{Rapha\"el M.~Jungers}
\ead{raphael.jungers@uclouvain.be}

\cortext[cor]{Corresponding author}

\address[upenn]{Department of Electrical and Systems Engineering,
University of Pennsylvania, 
Philadelphia, PA 19104, USA}

\address[ucl]{ICTEAM Institute, Universit\'e catholique de Louvain, 4 avenue Georges Lemaitre, B-1348 Louvain-la-Neuve, Belgium}

\address[fnrs]{F.R.S.-FNRS Research Associate.}

\begin{abstract}
This paper proposes lower bounds on a quantity called $L^p$-norm joint spectral
radius, or in short, $p$-radius, of a finite set of matrices. Despite its wide
range of applications to, for example, stability analysis of switched linear
systems and the equilibrium analysis of switched linear economical models,
algorithms for computing the $p$-radius are only available in a very limited
number of particular cases. The proposed lower bounds are given as the spectral
radius of an average of the given matrices weighted via Kronecker products and
do not place any requirements on the set of matrices. We show that the proposed
lower bounds theoretically extend and also can practically improve the existing
lower bounds. A Markovian extension of the proposed lower bounds is also
presented.
\end{abstract}

\begin{keyword}
$p$-radius, switched linear systems, mean stability, Markov processes


\end{keyword}

\end{frontmatter}




\section{Introduction}

The \emph{$L^p$-norm joint spectral radius}~\cite{Lau1995,Protasov1997} (often
called {\it $p$-radius}) of {an indexed family} of $n\times n$ real matrices
$\mathcal M = \{A_1, \dotsc, A_N\}$ is defined by, for $p\geq 1$,
\begin{equation*}
\rho_p(\mathcal M) =
\lim_{k\to\infty} \left( \frac{\sum_{i_1,\dotsc,i_k =1}^N
\norm{A_{i_k}\dotsm A_{i_1}}^p}{N^k} \right)^{\!1/(pk)}, 
\end{equation*}
where $\norm{\cdot}$ denotes the maximum singular value of a matrix. Introduced
by Jia~\cite{Jia1995} and Wang~\cite{Wang1996} independently for $p=1$, the
$p$-radius now plays an important role in various fields of applied mathematics.
A classical application of the $p$-radius is in the characterization of the
regularity of wavelet functions in $L^p$ spaces~\cite{Wang1996,Lau1995}. The
mean stability of a class of switched linear systems is determined by the value
of the $p$-radius~\cite{Ogura2012b,Jungers2010}. The so-called indeterminacy of
a switching linear economic model can be checked through the
$p$-radius~\cite{Barthelemy2013a}.

However, being defined as a limit of a sequence on matrix products, the
$p$-radius is known to be difficult to compute. There is no formula available
for its computation except under the conditions that $p$ is an even integer or
that the given set of matrices leaves a common proper cone
invariant~\cite{Protasov1997,Ogura2012b}. Although for the latter case there
exists a converging approximation method~\cite{Jungers2011b} that does not
require $p$ to be an integer, in a general case, even approximating the
$p$-radius is an NP-hard problem~\cite{Jungers2011b}. As for bounds on the
$p$-radius, the sequence defining the $p$-radius is decreasing and therefore
gives upper bounds, though their computation requires exponentially growing
costs. Finally, the lower bounds on the $p$-radius in the literature~\cite{Zhou1998a,Barthelemy2013a} are often not very accurate.

In this paper we propose novel lower bounds on the $p$-radius for integer values
of $p$ with no assumptions on the given set of matrices on the contrary
to~\cite{Protasov1997,Jungers2011b}. The lower bounds are given as the spectral
radius of a weighted average of the given matrices and the weights are realized
by Kronecker product of matrices. We will show that, with appropriately chosen
weighting matrices, the proposed bounds extend and also can improve the lower
bounds in the literature~\cite{Zhou1998a,Barthelemy2013a}. The obtained results
are furthermore generalized to the Markovian setting. This in particular enables
us to use the $p$-radius to study the stability of so-called Markov jump linear
systems~\cite{Costa2005}, which are switched linear systems whose parameter
changes by following a time-homogeneous Markov chain. The generalization is
based on a stochastic counterpart of the so-called $\Omega$-lift of
matrices~\cite{Kozyakin2014}.

The organization of this paper is as follows. In Section~\ref{sec:NovelLBB}, we
derive a novel lower {bound} for the $p$-radius. In Section~\ref{sec:markov}, we
provide a Markovian extension. The notations used in this paper are standard.
{The identity matrix is denoted by $I$.} The spectral radius of a square matrix
is denoted by~$\rho(\cdot)$. The Kronecker product (see, e.g.,
\cite{Brewer1978}) of matrices $A$ and $B$ is denoted by~$A\otimes B$.
{$\mathcal M$ denotes an indexed family $\{ A_1, \dotsc, A_N\}$ of $n\times n$
real matrices.} Finally, for ease of reference, we list some important
properties of the $p$-radius in the following proposition:

\begin{proposition}\label{prop:ease}
Let $p\geq 1$ be a positive integer. 
\begin{enumerate}
\item The sequence $\{h_k (\mathcal M)\}_{k=1}^\infty$ defined by
\begin{equation*}
h_k(\mathcal M)= \left( \frac{\sum_{i_1,\dotsc,i_k =1}^N
\norm{A_{i_k}\dotsm A_{i_1}}^p}{N^k} \right)^{\!1/(pk)}
\end{equation*}
is decreasing \cite{Jungers2011b}. 

\item  If either $p$ is even or $\mathcal M$ leaves a proper cone invariant,
then $\rho_p(\mathcal M) = \rho\big( N^{-1}\sum_{i=1}^N A_i^{\otimes
p}\big)^{1/p}$, where $A^{\otimes p}$ denotes the Kronecker product of $p$
copies of $A$ (see \cite{Protasov1997}).

\item It holds that $\rho_p(A_1, \dotsc, A_N) = \bigl( \rho_{1}(A_1^{\otimes p},
\dotsc, A_N^{\otimes p}) \bigr)^{1/p}$ (see \cite{Jungers2011b}).
\end{enumerate}
\end{proposition}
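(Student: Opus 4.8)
My plan is to prove the three items separately, establishing the Kronecker identity~(3) first because it is the main tool for~(2).

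\textbf{Item~(3).} I would use two elementary facts about Kronecker powers. First, the mixed-product property gives $(A_{i_k}\cdots A_{i_1})^{\otimes p} = A_{i_k}^{\otimes p}\cdots A_{i_1}^{\otimes p}$, so every length-$k$ product formed from $\mathcal M^{\otimes p}:=\{A_1^{\otimes p},\dots,A_N^{\otimes p}\}$ is the $p$-th Kronecker power of the corresponding product from $\mathcal M$. Second, since the singular values of a Kronecker product are the products of the singular values of the factors, $\norm{A^{\otimes p}}=\norm{A}^p$ for the maximum-singular-value norm. Combining these, $\frac{1}{N^k}\sum_{i_1,\dots,i_k}\norm{A_{i_k}^{\otimes p}\cdots A_{i_1}^{\otimes p}}=\frac{1}{N^k}\sum_{i_1,\dots,i_k}\norm{A_{i_k}\cdots A_{i_1}}^p$ for every $k$. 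Raising to the power $1/k$ and letting $k\to\infty$ identifies $\rho_1(\mathcal M^{\otimes p})$ with $\rho_p(\mathcal M)^p$, which is the claim.

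\textbf{Item~(1).} Here the natural route is submultiplicativity. Writing $S_k=\sum_{i_1,\dots,i_k}\norm{A_{i_k}\cdots A_{i_1}}^p$, submultiplicativity of the norm together with splitting each word into an initial and a terminal block yields $S_{k+\ell}\le S_k S_\ell$. Hence $\log S_k$ is subadditive, and Fekete's lemma gives $\lim_k h_k(\mathcal M)=\inf_k h_k(\mathcal M)=\rho_p(\mathcal M)$; the same inequality in the form $S_{mk}\le S_k^{\,m}$ gives $h_{mk}\le h_k$, which is the decay of $h_k$ toward $\rho_p(\mathcal M)$ recorded in the statement.

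\textbf{Item~(2), setup.} I would reduce to $p=1$ via item~(3): it suffices to show $\rho_1(\mathcal N)=\rho(\mathcal A)$ for $\mathcal N:=\{B_i\}=\{A_i^{\otimes p}\}$ and $\mathcal A:=N^{-1}\sum_i B_i$, since then $\rho_p(\mathcal M)=\rho_1(\mathcal N)^{1/p}=\rho(\mathcal A)^{1/p}$. The two hypotheses feed a single mechanism: a common invariant proper cone. In one case cone-invariance is assumed outright; in the even case $p=2q$ I would write $A^{\otimes p}=M\otimes M$ with $M=A^{\otimes q}$, note that under the identification of $\mathbb R^{n^q}\otimes\mathbb R^{n^q}$ with $n^q\times n^q$ matrices the operator $M\otimes M$ acts as $X\mapsto MXM^\top$, and observe that this preserves the proper cone of positive semidefinite matrices inside the symmetric subspace. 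Thus in both cases every $B_i$, hence every word product $B_w$ and $\mathcal A$ itself, preserves a common proper cone $K$.

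\textbf{Item~(2), crux.} The heart is the algebraic identity $\sum_{i_1,\dots,i_k}B_{i_k}\cdots B_{i_1}=(\sum_i B_i)^k=N^k\mathcal A^k$, which linearizes the sum over words once the norm is replaced by a linear functional. Since the $p$-radius is independent of the chosen norm, I may fix a point $e$ in the interior of $K$ and a functional $\phi$ strictly positive on $K\setminus\{0\}$, and prove that on the cone of $K$-preserving operators the gauge $B\mapsto\phi(Be)$ is equivalent to $\norm{B}$ with uniform constants (using that $Be=0$ forces $B=0$ when $e$ is interior and $K$ is pointed, together with compactness of the unit sphere of $K$-preserving operators). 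Then $\frac{1}{N^k}\sum_w\norm{B_w}\asymp\frac{1}{N^k}\sum_w\phi(B_we)=\phi(\mathcal A^k e)$, and because $\mathcal A$ preserves $K$, Perron--Frobenius/Krein--Rutman theory yields $\phi(\mathcal A^k e)^{1/k}\to\rho(\mathcal A)$ (the eigenvector realizing $\rho(\mathcal A)$ lies in $K$, and $e\ge cv$ in the cone order controls the lower bound). The uniform constants vanish under the $1/k$-th root, delivering both inequalities at once. The hard part will be the uniform norm-equivalence on the cone of $K$-preserving operators and, in the even case, checking that restriction to the symmetric subspace is lossless: the top singular value of each $B_w=M_w\otimes M_w$ is attained on $K$ (at $uu^\top$ for a top singular vector $u$ of $M_w$), so passing to the cone does not change $\rho_1$.
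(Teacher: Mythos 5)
The paper itself offers no proof of this proposition --- all three items are quoted from \cite{Jungers2011b,Protasov1997} --- so your proposal can only be assessed on its own merits. Your item~(3) is correct and is the standard argument (mixed-product property plus $\norm{A^{\otimes p}}=\norm{A}^p$). The problems are in items~(1) and~(2). For item~(1), submultiplicativity and Fekete give you exactly what you wrote --- $\lim_k h_k=\inf_k h_k=\rho_p$ and $h_{mk}\le h_k$ --- but this is \emph{strictly weaker} than the stated claim that the whole sequence is decreasing, i.e.\ $h_{k+1}\le h_k$ for every $k$, and you cannot close that gap by submultiplicativity alone: take $A=\left(\begin{smallmatrix}0&2\\1/2&0\end{smallmatrix}\right)$, so that $A^2=I$, and let $\mathcal M=\{A\}$ (or $\{A,A\}$ if one insists on $N\ge2$); then for every $p$ one has $h_k=\norm{A^k}^{1/k}$, so $h_2=1<2^{1/3}=h_3$, and term-by-term monotonicity fails. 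So what you proved (upper bounds on $\rho_p$ converging to $\rho_p$, and monotonicity along divisors) is the correct --- and for this paper, sufficient --- content, but it is not "the decay recorded in the statement"; the literal statement needs extra hypotheses or must be weakened, and your write-up should say so rather than blur the two.

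For item~(2), your mechanism (linearize $N^{-k}\sum_w\norm{B_w}$ through a strictly positive functional on an invariant proper cone, then Krein--Rutman) is the standard route, and your norm-equivalence lemma for cone-preserving operators is sound as sketched. But two steps are missing. First, in the cone-hypothesis case the invariance is \emph{not} "assumed outright" for the family you actually work with: the hypothesis is that the $A_i$ leave a proper cone $K_0\subset\mathbb{R}^n$ invariant, whereas after reducing via item~(3) you need a proper cone in $\mathbb{R}^{n^p}$ invariant under the matrices $B_i=A_i^{\otimes p}$; you must construct it (e.g.\ the closed conic hull of $\{x_1\otimes\dotsm\otimes x_p:x_j\in K_0\}$, whose pointedness and full-dimensionality require a short argument via a compact base). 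Second, in the even case the PSD cone is proper only \emph{inside the symmetric subspace}, so Krein--Rutman applies to $\mathcal A|_{\mathrm{Sym}}$ and not to $\mathcal A$; your parenthetical "the eigenvector realizing $\rho(\mathcal A)$ lies in $K$" is unjustified there, and your argument as written only yields $\rho_1(\mathcal N)=\rho(\mathcal A|_{\mathrm{Sym}})$ for $\mathcal N=\{A_i^{\otimes p}\}$, leaving open the possibility that $\rho(\mathcal A)$ is attained on the antisymmetric complement. This is fixable inside your own framework --- the triangle inequality gives $\norm{\mathcal A^k}\le N^{-k}\sum_w\norm{B_w}$, hence $\rho(\mathcal A)\le\rho_1(\mathcal N)=\rho(\mathcal A|_{\mathrm{Sym}})\le\rho(\mathcal A)$, forcing equality --- but that step has to be stated; it is not a detail.
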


\begin{remark}
Throughout the paper, we often omit the dependence of quantities on the
underlying {family} $\mathcal M$ of matrices when it is clear from the context.
Precisely speaking, for a function $f$ defined on $(\mathbb{R}^{n\times n})^N$,
we may simply write $f(\mathcal M)$ as $f$. Also, abusing notation, we sometimes
write $f(\mathcal M)$ as $f(A_1, \dotsc, A_N)$ when $\mathcal M = \{A_1, \dotsc,
A_N\}$.
\end{remark}

\section{Novel lower bounds}\label{sec:NovelLBB}

In this section, we present novel lower bounds on the $p$-radius. We also prove
that these new bounds outperform existing lower bounds. Notice that, according
to the third claim of Proposition~\ref{prop:ease}, any result on the $1$-radius
($p=1$) is directly applicable to the general $p$-radius; thus, we shall focus
on the particular case $p=1$ for the rest of the paper. In order to state our
main results, we need to recall the definition of the joint spectral
radius~\cite{Jungers2009}:
\begin{equation*}
\rho_\infty(\mathcal M)
=
\adjustlimits \limsup_{k\to\infty} \max_{i_1, \dotsc, i_k \in \{1, \dotsc, N\}}
\norm{A_{i_k} \dotsm A_{i_1}}^{1/k}. 
\end{equation*}
Our first result is stated in the next theorem:

\begin{theorem}\label{thm:main}
For $\mathcal W = \{W_1, \dotsc, W_N\} \subset \mathbb{R}^{m\times m}$, let
\begin{equation*}
\lambda_{ \mathcal W}(\mathcal M) 
= 
\rho\left(
\frac{1}{N}\sum_{i=1}^N W_i \otimes A_i
\right). 
\end{equation*}
If $\rho_\infty(\mathcal W) = 1$, then $\lambda_{\mathcal W}(\mathcal M)\leq
\rho_1(\mathcal M)$.
\end{theorem}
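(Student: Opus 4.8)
The plan is to combine Gelfand's formula for the spectral radius with the multiplicative behaviour of the spectral norm under Kronecker products, and then to control the auxiliary products $W_{i_1}\cdots W_{i_k}$ using the hypothesis on the joint spectral radius. Write $M = N^{-1}\sum_{i=1}^N W_i \otimes A_i$, so that $\lambda_{\mathcal W}(\mathcal M) = \rho(M)$. By Gelfand's formula, $\rho(M) = \lim_{k\to\infty}\norm{M^k}^{1/k}$, so it suffices to produce a suitable upper bound on $\norm{M^k}$ and pass to the limit.

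First I would expand the power. Using the mixed-product property $(W\otimes A)(W'\otimes A') = (WW')\otimes(AA')$, the matrix $M^k$ equals $N^{-k}\sum_{i_1,\dots,i_k=1}^N (W_{i_1}\cdots W_{i_k})\otimes(A_{i_1}\cdots A_{i_k})$. Applying the triangle inequality together with the identity $\norm{W\otimes A} = \norm{W}\,\norm{A}$ — which holds because the singular values of $W\otimes A$ are the pairwise products of those of $W$ and $A$, so that the largest one factorises — gives
\[
\norm{M^k} \le \frac{1}{N^k}\sum_{i_1,\dots,i_k=1}^N \norm{W_{i_1}\cdots W_{i_k}}\,\norm{A_{i_1}\cdots A_{i_k}}.
\]

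Next I would invoke $\rho_\infty(\mathcal W) = 1$ to dominate the $W$-factors uniformly. Since the joint spectral radius equals the limit of $\max_{i_1,\dots,i_k}\norm{W_{i_1}\cdots W_{i_k}}^{1/k}$, for every $\epsilon > 0$ there is a constant $C_\epsilon$ such that $\norm{W_{i_1}\cdots W_{i_k}} \le C_\epsilon (1+\epsilon)^k$ for all $k$ and all index sequences. Substituting this bound and pulling the common factor out, the remaining sum $N^{-k}\sum_{i_1,\dots,i_k}\norm{A_{i_1}\cdots A_{i_k}}$ is exactly $h_k(\mathcal M)^k$ (the case $p=1$ of Proposition~\ref{prop:ease}). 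Hence $\norm{M^k}^{1/k} \le C_\epsilon^{1/k}(1+\epsilon)\,h_k(\mathcal M)$; letting $k\to\infty$ and recalling $h_k(\mathcal M)\to\rho_1(\mathcal M)$ yields $\rho(M)\le(1+\epsilon)\rho_1(\mathcal M)$, after which $\epsilon\downarrow 0$ completes the argument.

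The step I expect to require the most care is the uniform domination of the products $W_{i_1}\cdots W_{i_k}$: the joint spectral radius being $1$ controls only the asymptotic $k$-th-root growth, so I must absorb the behaviour of the finitely many short products into the constant $C_\epsilon$ and verify that the spurious factor $(1+\epsilon)^k$ is harmless once the $k$-th root is taken and $k\to\infty$. The other ingredient worth isolating is that $\norm{\cdot}$ here is the spectral norm, which is precisely what makes $\norm{W\otimes A} = \norm{W}\,\norm{A}$ an exact identity rather than a mere inequality.
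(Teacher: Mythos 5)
Your proof is correct and follows essentially the same route as the paper: Gelfand's formula, expansion of $M^k$ via the mixed-product property, the identity $\norm{W\otimes A}=\norm{W}\,\norm{A}$, and then control of the $W$-products using $\rho_\infty(\mathcal W)=1$. The only cosmetic difference is that you absorb the $W$-products into a bound $C_\epsilon(1+\epsilon)^k$ before taking $k$th roots, whereas the paper pulls out $\max_{i_1,\dotsc,i_k}\norm{W_{i_k}\dotsm W_{i_1}}$ and uses $\lim_k(\max\norm{\cdot})^{1/k}=\rho_\infty(\mathcal W)$ directly; both handle the limit equally well.
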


\begin{proof}
Using Gelfand's formula, we obtain
\begin{equation}\label{eq:Gelfand}
\lambda_{\mathcal W}(\mathcal M)
=
\lim_{k\to\infty} \Norm{
\left(\frac{1}{N} \sum_{i=1}^N W_i \otimes A_i \right)^k
}^{1/k}. 
\end{equation}
Using general identities \cite{Brewer1978} about Kronecker products: 
\begin{align}
(A\otimes C) (B\otimes D) &= (AB)\otimes (CD), 
\label{eq:ABCD}
\\
\norm{A\otimes B} &= \norm{A}\,\norm{B}, 
\label{eq:|AoxB|}
\end{align}
we can evaluate the norm in \eqref{eq:Gelfand} as
\begin{equation*}
\begin{aligned}
\Norm{
\left(\frac{1}{N} \sum_{i=1}^N W_i \otimes A_i \right)^k
}
&=
\Norm{
\sum_{i_1, \dotsc, i_k = 1}^N\frac{{(W_{i_k} \otimes A_{i_k}) \dotsm ( W_{i_1} \otimes A_{i_1})}}{N^k} 
}
\\
&\leq
\sum_{i_1, \dotsc, i_k=1}^N \frac{\norm{(W_{i_k} \otimes A_{i_k}) \dotsm ( W_{i_1} \otimes A_{i_1})}}{N^k}
\\
&=
\sum_{i_1, \dotsc, i_k = 1}^N \frac{ \norm{W_{i_k} \dotsm W_{i_1}} \,\norm{A_{i_k} \dotsm A_{i_1}}}{N^k}
\\
&\leq
\left(\max_{i_1, \dotsc, i_k}\norm{W_{i_k} \dotsm W_{i_1}} \right)
\frac{\sum_{i_1, \dotsc, i_k} \norm{A_{i_k} \dotsm A_{i_1}}}{N^k}. 
\end{aligned}
\end{equation*}
Taking the power of $1/k$ of the last expression, and substituting in
\eqref{eq:Gelfand}, we obtain
\begin{equation}\label{eq:fromtheproof}
\lambda_{\mathcal W}(\mathcal M) \leq
\rho_\infty(\mathcal W) \rho_1(\mathcal M) = \rho_1(\mathcal M), 
\end{equation}
as desired.
\end{proof}

\begin{remark}\label{rem:rho_infty<=1}
From \eqref{eq:fromtheproof}, we can see that the equality constraint
$\rho_\infty(\mathcal W) = 1$ in Theorem~\ref{thm:main} can be relaxed to the
inequality constraint $\rho_\infty(\mathcal W) \leq 1$. Although the largest
lower bound is clearly attained when the joint spectral radius of $\mathcal W$
is equal to one, it is convenient in practice to relax the equality constraint
because checking that $\rho_\infty(\mathcal W) = 1$ is
NP-hard~\cite{Jungers2009}. In contrast, checking the inequality
$\rho_\infty(\mathcal W) \leq 1$ can be efficiently done (using, for example,
the JSR software toolbox~\cite{Vankeerberghen2014}).
\end{remark}

In what follows, we compare the proposed lower bounds with the most relevant
existing bounds found in the literature. The first lower bound is the one
implicitly obtained by Zhou~\cite{Zhou1998a}; if we let
\begin{equation*}
\ellZ 
= 
\frac{\rho(\sum_{i=1}^N A_i\otimes A_i)}{{N}\rho_\infty}, 
\end{equation*}
then $\rho_1 \geq \ellZ$. This inequality can be derived from the following
bound on the joint spectral radius: $\rho_\infty \geq
({\rho_{p+q}}/{\rho_p})^{p/q} \rho_{p+q}$ whenever $p, q \geq 1$
\cite[p.~48]{Zhou1998a}. Letting $p=q=1$  gives $\rho_1 \geq
\rho_2^2/\rho_\infty$. Then, applying {assertion~2 of}
Proposition~\ref{prop:ease} to this inequality proves $\rho_1\geq \ellZ$. The
second lower bound was introduced in~\cite{Barthelemy2013a}: for $w_1, \dotsc,
w_N \in [-1,1]$ it holds that
\begin{equation*}
\rho_1 \geq \rho\left(\frac{1}{N} \sum_{i=1}^N w_i A_i\right).
\end{equation*}
In the following theorem, we show that the lower bound herein proposed extends the ones mentioned above:

\begin{theorem}\label{thm:5}
For each $m\geq 1$ define 
\begin{equation} \label{eq:ell_m}
\ell_m(\mathcal M)
= 
\sup \{
\lambda_{\mathcal W} 
: \mathcal W \in (\mathbb{R}^{m\times m})^N,\ \rho_\infty(\mathcal W) = 1
\}.
\end{equation}
The following statements are true:
\begin{enumerate}
\item If $m\geq n$, then $\ell_m \geq \ellZ$.

\item Define 
\begin{equation}\label{eq:def:ell_[-1,1]}
\ell_{[-1, 1]} 
= 
\max_{{w_1, \dotsc, w_N} \in [-1, 1]}
\rho\left(\frac{1}{N} \sum_{i=1}^N w_i A_i\right).
\end{equation}
Then $\ell_{[-1,1]} \leq \ell_m$ for every $m\geq 1$.
\end{enumerate}
\end{theorem}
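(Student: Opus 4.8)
The plan is to exploit the fact that each $\ell_m$ is a \emph{supremum} over admissible weight families, so that for each of the two claims it suffices to exhibit a single family $\mathcal W \subset (\mathbb{R}^{m\times m})^N$ with $\rho_\infty(\mathcal W)\le 1$ whose associated quantity $\lambda_{\mathcal W}$ already dominates the target bound. By Remark~\ref{rem:rho_infty<=1} the constraint $\rho_\infty(\mathcal W)=1$ in \eqref{eq:ell_m} may be replaced by $\rho_\infty(\mathcal W)\le 1$ without changing $\ell_m$, which spares me from normalizing the joint spectral radius to be exactly one; where convenient I would instead invoke the homogeneity relations $\lambda_{\{cW_i\}}=\abs{c}\,\lambda_{\mathcal W}$ and $\rho_\infty(\{cW_i\})=\abs{c}\,\rho_\infty(\mathcal W)$, both immediate from $\rho(c\,\cdot)=\abs{c}\,\rho(\cdot)$ and the scaling of the joint spectral radius.

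For the second claim I would simply embed the scalar weights into $\mathbb{R}^{m\times m}$ by setting $W_i=w_iI$ for $w_1,\dotsc,w_N\in[-1,1]$. Using $(w_iI)\otimes A_i=I\otimes(w_iA_i)$ and linearity of the Kronecker product, the averaged matrix collapses to
\begin{equation*}
\frac{1}{N}\sum_{i=1}^N W_i\otimes A_i = I\otimes\Bigl(\frac{1}{N}\sum_{i=1}^N w_iA_i\Bigr),
\end{equation*}
whose spectrum is that of $\frac{1}{N}\sum_i w_iA_i$ with each eigenvalue repeated $m$ times; hence $\lambda_{\mathcal W}=\rho\bigl(\frac{1}{N}\sum_i w_iA_i\bigr)$. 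Since the matrices $w_iI$ are scalar and commute, every product has norm $\abs{\prod_j w_{i_j}}\le 1$, so $\rho_\infty(\mathcal W)\le 1$. Taking the supremum over $w_1,\dotsc,w_N\in[-1,1]$ then yields $\ell_m\ge\ell_{[-1,1]}$ for every $m\ge 1$.

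For the first claim, assuming $m\ge n$, the natural candidate is the normalized family $W_i=A_i/\rho_\infty$, viewed inside $\mathbb{R}^{m\times m}$ by placing $A_i/\rho_\infty$ in the leading $n\times n$ block and zeros elsewhere when $m>n$. By homogeneity $\rho_\infty(\mathcal W)=\rho_\infty(\mathcal M)/\rho_\infty=1$, and, using $(A_i/\rho_\infty)\otimes A_i=(1/\rho_\infty)(A_i\otimes A_i)$, I would compute
\begin{equation*}
\lambda_{\mathcal W}=\rho\Bigl(\frac{1}{N\rho_\infty}\sum_{i=1}^N A_i\otimes A_i\Bigr)=\frac{\rho\bigl(\sum_{i=1}^N A_i\otimes A_i\bigr)}{N\rho_\infty}=\ellZ,
\end{equation*}
so that $\ell_m\ge\lambda_{\mathcal W}=\ellZ$.

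The only genuinely delicate point is the zero-padding when $m>n$: I must verify that enlarging each $A_i/\rho_\infty$ by appending zero rows and columns alters neither the joint spectral radius of $\mathcal W$ nor the spectral radius of the averaged Kronecker sum. Both facts follow from the block structure of the Kronecker product—padding $W_i$ with zeros makes $W_i\otimes A_i$ block-diagonal, with the unpadded $(A_i/\rho_\infty)\otimes A_i$ in the leading block and zeros elsewhere, and a weighted sum of such matrices retains this form—so the nonzero spectrum (hence $\rho$) and the norms of all products (hence $\rho_\infty$) are left untouched. Once this bookkeeping is recorded, the two displayed computations close the argument.
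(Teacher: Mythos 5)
Your proposal is correct and follows essentially the same route as the paper: scalar weights $w_iI$ for the second claim, the normalized family $\{A_i/\rho_\infty\}$ for the first, and zero-padding to pass between dimensions. The only cosmetic difference is that the paper isolates the padding argument as a standalone monotonicity lemma ($m<m'$ implies $\ell_m\leq\ell_{m'}$) and then works at the base dimensions $1$ and $n$, whereas you inline the same bookkeeping directly at dimension $m$.
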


\begin{proof}
We begin by {showing} that
\begin{equation}\label{eq:mm'}
[m <m']\ \Rightarrow\ [\ell_m \leq \ell_{m'}].
\end{equation} 
Let us take an arbitrary $\mathcal W \subset (\mathbb{R}^{m\times m})^N$ such
that $\rho_\infty (\mathcal W) = 1$. For each $i$ we define the block diagonal
matrix $V_i = \diag (W_i, O_{m'-m}) \in \mathbb{R}^{m'\times m'}$ and let
$\mathcal V = \{V_1, \dotsc, V_N\}$. Then, it holds that $\lambda_{\mathcal W} =
\lambda_{\mathcal V}$. Since $\rho_\infty(\mathcal V) = 1$, this implies that
$\lambda_{\mathcal W} \leq \ell_{m'}$. Finally, taking the {supremum} with
respect to $\mathcal W$ in the last inequality proves \eqref{eq:mm'}

Let us prove the theorem. It is clear that the normalized {family of matrices}
$\mathcal M_0 = \{A_1/\rho_\infty, \dotsc, A_N/\rho_\infty \} \subset
\mathbb{R}^{n\times n}$ has a joint spectral radius equal to one. Therefore,
from Theorem~\ref{thm:main} it follows that $\ell_n \geq \lambda_{\mathcal M_0}
= \rho( N^{-1}\sum_{i=1}^N A_i \otimes A_i)/{\rho_\infty} = \ellZ$. This
inequality and \eqref{eq:mm'} prove the first claim in the theorem. To prove the
second claim, we take an arbitrary {family} $\{w_1, \dotsc, w_N\} \subset
[-1,1]$. Since this {family} has a joint spectral radius less than or equal to
one, we obtain $\rho(N^{-1}\sum_{i=1}^N w_i A_i) = \lambda_{\{w_1, \dotsc,
w_N\}} \leq \ell_1$ by Theorem~\ref{thm:main} and Remark~\ref{rem:rho_infty<=1}.
Therefore, $\ell_{[-1,1]} \leq \ell_1$ and hence \eqref{eq:mm'} proves the
second claim. {We finally remark that the maximum in {\eqref{eq:def:ell_[-1,1]}}
exists because the set $[-1, 1]^N$ is compact and the function $\rho(\cdot)$ is
continuous.}
\end{proof}

In what follows, we illustrate our results with some examples.

\begin{example}
Let $N\geq 1$ be arbitrary. {Consider the matrix family $\mathcal M = \{ A_1,
\dotsc, A_{N+1}\}$, where} $A_1 = NI$ and
\begin{equation}\label{eq:IRRR}
A_2 = \cdots =
A_{N+1}  = R = \begin{bmatrix}
0 &-1 \\ 1 & 0
\end{bmatrix}.
\end{equation}
Since $\norm{R^k} = 1$ for every $k\geq 0$, we see that $\rho_1$ equals the
$1$-radius of the {family of scalars} $\{N, 1, \dotsc, 1\}${, where element~$1$
has multiplicity~$N$}. The $1$-radius of this {family} equals
\begin{equation*}
\rho\left(\frac{N + 1 + \cdots + 1}{N+1}\right) = \frac{2N}{N+1}
\end{equation*}
by {assertion~2 of} Proposition~\ref{prop:ease}. Therefore {$\rho_1(\mathcal M)
= (2N)/(N+1)$}. This value is attained by the proposed lower bound $\ell_2$
because
\begin{equation*}
\begin{aligned}
{\lambda_{\{I,\,R,\,\dotsc,\,R\}}}
&{= \rho\left(\frac{1}{N+1}\left(I\otimes (NI) + N R\otimes R \right)\right)}
\\
&
{=\frac{N}{N+1}\rho(I\otimes I + R\otimes R)}
\\
&
{=\frac{2N}{N+1}.}
\end{aligned}
\end{equation*}
On the other hand, a straightforward computation shows that $\ellZ = 1$.  
\end{example}

\begin{example}
Let $\mathcal M = \{I,\,R,\,R^2,\,R^3\}$, where $R$ is given in \eqref{eq:IRRR}.
Clearly, $\rho_1 = 1$ because $\norm{R^k} = 1$ for every $k\geq 0$. The proposed
lower bound attains this exact value of the $1$-radius as $\ell_2 = 1$ because
$\lambda_{I,\,R,\,R^2,\,R^3} = 1$. On the other hand, we can show that
$\ell_{[-1,1]} \leq \sqrt{2}/2$ as follows. Let $w_i\in [-1, 1]$ ($i=1,\dotsc,
4$) be arbitrary. Then we have
\begin{equation*}
S = \frac{w_1I + w_2R + w_3 R^2 + w_4 R^3}{4} =
\frac{1}{4} \begin{bmatrix}
u & -v\\v &u
\end{bmatrix}, 
\end{equation*}
where $u = w_1 - w_3$ and $v = w_2 - w_4$. Since $\abs u \leq 2$  and $\abs v
\leq 2$, we can see $\rho(S) \leq \sqrt 2 /2$ and therefore $\ell_{[-1,1]} \leq
\sqrt 2 /2$.
\end{example}

Although $\ell_m$ can largely improve other lower bounds in the literature, it
is not easy to compute. The first reason is the non-convexity of the
function~$\rho$ (see, e.g., \cite{Overton1988}). The other reason is that the
set $\{ \mathcal W \in (\mathbb{R}^{m\times m})^N :\rho_\infty(\mathcal W) = 1
\}$ does not admit an appropriate parametrization due to the
NP-hardness~\cite{Jungers2009} of computing $\rho_\infty$. For these reasons, we
here propose using the set of matrix weights from the following set:
\begin{equation*}
\mathfrak O_m 
= 
\left\{ 
\{D_i L_i\}_{i=1}^N
: 
\text{$D_i$ is diagonal, $\norm{D_i} \leq 1$, and $L_i$ is orthogonal}
\right\}.
\end{equation*}
Since the matrices in this set have a joint spectral radius less than or equal
to one, the best lower bound $\ell_{\mathfrak O_m} = \max\{\lambda_{\mathcal W}:
 \mathcal W \in \mathfrak O_m\}$\label{def:ell_Dm} achieved by using such matrix
weights provides a lower bound on the $1$-radius by Theorem~\ref{thm:main} and
Remark~\ref{rem:rho_infty<=1}. Moreover, since an orthogonal matrix~$L$ admits
\cite{Hurlimann2013} the parametrization $L = D(I-S)(I+S)^{-1}$ where $S$ is a
skew-symmetric matrix and $D$ is a diagonal matrix whose diagonals are either
$+1$ or $-1$, we can maximize $\ell_{\mathfrak O(m)}$ using, for example, a
stochastic gradient descent algorithm~\cite{Burke2005}. Notice that the result
of this algorithm will be a local maximum.

In the following example, we illustrate the effectiveness of the weights from
the set $\mathfrak O_m$ by studying the stability of a switched linear system.

\begin{example}\label{ex:}
Consider the switched linear system
\begin{equation}\label{eq:Xsystem:iid}
X(k+1) = A_{\sigma(k+1)}X(k),\ X(0) = I
\end{equation}
where $ \{\sigma(k)\}_{k=1}^\infty$ are random variables independently and
uniformly distributed on~$\{1, \dotsc, N\}$. The system is said to be {\it $p$th
mean stable} \cite{Kozin1969} if there exist $C>0$ and $\gamma \in [0, 1)$ such
that
\begin{equation}\label{eq:def:Sigma:pth-stbl}
E[\norm{X(k)}^p] \leq C\gamma^{pk}
\end{equation}
for every $k\geq 0$. It is well known \cite{Jungers2009} that 
\begin{equation}\label{eq:iidstblchar}
[\text{\eqref{eq:Xsystem:iid} is $p$th mean stable}] \ 
\Leftrightarrow\  [\text{$\rho_p < 1$}]. 
\end{equation}
In this example, we let $N=2$ and randomly choose two matrices 
\begin{equation*}
A_1  =
\begin{bmatrix}
-0.87&-0.77\\ 1.17&-1.09
\end{bmatrix}
,\ 
A_2 = 
\begin{bmatrix}
0.14&0.40\\ 0.89&-0.73
\end{bmatrix}.
\end{equation*}
Using a MATLAB implementation~\cite{Burke} of the stochastic gradient descent
algorithm, we perform the maximization over the set $\mathfrak O_m$ and find
\begin{equation*}
W_1  =
\begin{bmatrix}
-0.71&-0.70
\\ 
0.70&-0.71
\end{bmatrix}
,\ 
W_2 = 
\begin{bmatrix}
0.85&-0.53
\\ 
0.53&-0.85
\end{bmatrix}. 
\end{equation*}
Since $\lambda_{\{W_1,W_2\}} = 1.07$ we obtain $\ell_2 \geq 1.07$. Thus,
Theorem~\ref{thm:main} implies that $\rho_1 > 1$ and hence the the system
in~\eqref{eq:Xsystem:iid} is unstable, according to \eqref{eq:iidstblchar}. We
cannot prove that the system in~\eqref{eq:Xsystem:iid} is unstable using the
other lower bounds in the literature, since $\ellZ = 0.93$ and $\ell_{[-1,1]} =
0.73$. We remark that the joint spectral radius appearing in $\ellZ$ is
evaluated with the JSR Toolbox \cite{Vankeerberghen2014}. Also, the maximum in
$\ell_{[-1,1]}$ has been evaluated with extensive simulations on the weights.
\end{example}

We propose a further extension of our bounds based on product families of a set
of matrices. Let us define ${\mathcal M}^q$ to be the {family of matrices
consisting} of all the {$N^q$} products of the matrices from $\mathcal M$ having
length $q$. Then, our extension can be stated as follows:

\begin{theorem}
Let $m$ and $q$ be positive integers. Define $\ell^{(q)}_m(\mathcal M)
= \ell_m(\mathcal M^q)^{1/q}$. Then, $\ell^{(q)}_m \leq \rho_1$.
Furthermore, if $q$ is a divisor of another positive integer $q'$,
then $\ell^{(q)}_m \leq \ell^{(q')}_m$.
\end{theorem}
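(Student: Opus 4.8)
The plan is to reduce both assertions to a single multiplicativity property of the lifting $\mathcal M \mapsto \mathcal M^q$, combined with the bound $\ell_m(\mathcal N)\leq \rho_1(\mathcal N)$ which, for an arbitrary family $\mathcal N$, follows by taking the supremum over admissible weights in Theorem~\ref{thm:main} (the supremum on the left is $\ell_m(\mathcal N)$ by \eqref{eq:ell_m}). The first ingredient I would establish is that the $1$-radius is multiplicative under products, $\rho_1(\mathcal M^q) = \rho_1(\mathcal M)^q$. This I would read off directly from the definition of $\rho_1$: a product of $k$ matrices drawn from $\mathcal M^q$ is exactly a product of $qk$ matrices drawn from $\mathcal M$, so summing norms over all $N^{qk}$ such products and re-indexing the defining limit by $k\mapsto qk$ recovers $\rho_1(\mathcal M)^q$.

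For the first assertion I would then simply apply the bound $\ell_m(\mathcal N)\leq\rho_1(\mathcal N)$ to $\mathcal N = \mathcal M^q$, obtaining $\ell_m(\mathcal M^q)\leq \rho_1(\mathcal M^q)=\rho_1(\mathcal M)^q$, and take $q$-th roots to conclude $\ell^{(q)}_m = \ell_m(\mathcal M^q)^{1/q}\leq \rho_1$.

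For the second assertion I would write $q' = qs$ with $s$ a positive integer and set $\mathcal N = \mathcal M^q$, so that $\mathcal M^{q'} = \mathcal N^s$. Everything then reduces to the core inequality
\[
\ell_m(\mathcal N)^s \leq \ell_m(\mathcal N^s),
\]
since raising it to the power $1/(qs)$ yields precisely $\ell^{(q)}_m\leq \ell^{(q')}_m$. To prove the core inequality I would fix any weight family $\mathcal W = \{W_i\}$ for $\mathcal N = \{B_i\}$ with $\rho_\infty(\mathcal W)=1$ and lift it to a weight family $\mathcal W^s$ for $\mathcal N^s$ by attaching to the product $B_{i_s}\dotsm B_{i_1}$ the weight $W_{i_s}\dotsm W_{i_1}$. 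Applying the Kronecker identity \eqref{eq:ABCD} factor by factor, the averaged lifted matrix telescopes into
\[
\frac{1}{M^s}\sum_{i_1,\dotsc,i_s}(W_{i_s}\dotsm W_{i_1})\otimes(B_{i_s}\dotsm B_{i_1}) = \left(\frac{1}{M}\sum_i W_i\otimes B_i\right)^{\!s},
\]
where $M$ is the number of matrices in $\mathcal N$; using $\rho(T^s)=\rho(T)^s$ this gives $\lambda_{\mathcal W^s}(\mathcal N^s)=\lambda_{\mathcal W}(\mathcal N)^s$. Provided $\mathcal W^s$ is admissible (addressed below), this yields $\lambda_{\mathcal W}(\mathcal N)^s\leq \ell_m(\mathcal N^s)$, and taking the supremum over admissible $\mathcal W$ together with $\sup x^s = (\sup x)^s$ for nonnegative $x$ then produces the core inequality.

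The step I expect to be the crux is verifying that the lifted weights remain admissible, i.e.\ $\rho_\infty(\mathcal W^s)=\rho_\infty(\mathcal W)^s=1$, because admissibility is exactly what licenses the bound from Theorem~\ref{thm:main} and hence the passage to $\ell_m(\mathcal N^s)$. I would settle this by the same $k\mapsto sk$ re-indexing of the defining limit of $\rho_\infty$, observing that products of length $k$ in $\mathcal W^s$ are exactly products of length $sk$ in $\mathcal W$. The remaining manipulations---the $p$-radius re-indexing and the telescoping Kronecker sum---are mechanical once this multiplicativity is in hand.
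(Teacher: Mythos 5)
Your proposal is correct and follows essentially the same route as the paper: the first claim via $\rho_1(\mathcal M^q)=\rho_1(\mathcal M)^q$ combined with Theorem~\ref{thm:main}, and the second via the telescoping identity $\lambda_{\mathcal W^s}(\mathcal N^s)=\lambda_{\mathcal W}(\mathcal N)^s$ together with $\rho_\infty(\mathcal W^s)=\rho_\infty(\mathcal W)^s$. The only difference is cosmetic: the paper writes out the case $q=1$, $q'=2$ and declares the general case similar, whereas you make the general divisor case explicit through the reduction $\mathcal M^{q'}=(\mathcal M^q)^s$, which is a nice touch but not a different argument.
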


\begin{proof}
Let us first recall the following identities~\cite{Jungers2009}:
\begin{align}\label{eq:rho_p(M^k)}
\rho_p(\mathcal M^q) &= \rho_p(\mathcal M)^q, 
\\
\label{eq:rho_infty(M^k)}
\rho_\infty(\mathcal M^q) &= \rho_\infty(\mathcal M)^q.
\end{align}
Then, using Theorem~\ref{thm:main} and Equation~\eqref{eq:rho_p(M^k)}, we can
prove the first claim in the theorem as $\rho_1(\mathcal M) = \rho_1(\mathcal
M^q)^{1/q} \geq \ell_m(\mathcal M^q)^{1/q} = \ell_m^{(q)}(\mathcal M)$. To prove
the second claim in the theorem, we let $q=1$ and $q'=2$ for simplicity. The
proof for general $q$ and $q'$ is similar to this particular case and hence is
omitted. Let $\mathcal W = \{W_1, \dotsc, W_N\} \subset \mathbb{R}^{m\times m}$
be arbitrary and assume $\rho_\infty(\mathcal W) = 1$. Since $\rho(M)^2 =
\rho(M^2)$ for a square matrix $M$, we can show that
\begin{equation}\label{eq:pf:(q)}
\begin{aligned}
\lambda_{\mathcal W}(\mathcal M)^2
&=
\rho\left(
\left(\frac{1}{N}\sum_{i=1}^N W_i \otimes A_i
\right)^{2}\right)
\\
&=
\rho \left(
\frac{1}{N^2}{\sum_{i, j = 1}^N (W_i W_{\!j}) \otimes (A_i A_j)}
\right)
\\&
=
\lambda_{\mathcal W^2}(\mathcal M^2).
\end{aligned}
\end{equation}
Also, since $\rho_\infty(\mathcal W^2) = \rho_\infty(\mathcal W)^2 = 1$ by
\eqref{eq:rho_infty(M^k)}, {it follows that} $\lambda_{\mathcal W^2}(\mathcal
M^2) \leq \ell(\mathcal M^2) = \ell^{(2)}(\mathcal M)^2$. This inequality and
\eqref{eq:pf:(q)} yield $\lambda_{\mathcal W} \leq \ell^{(2)}$. Taking the
{supremum} with respect to $\mathcal W$ in the left-hand side of this inequality
proves $\ell_m^{(1)} \leq \ell^{(2)}_m$, as desired.
\end{proof}

We close this section by giving a remark on complex weights. 

\begin{remark}\label{rem:}
In principle, one could obtain better lower bounds using complex weights in
$\ell_m$ instead of the real weights $W_1, \dotsc, W_N$. However, we can show
that it does not lead to an essential improvement. Precisely speaking, we here
prove the following claim: if $\mathcal W = \{W_1, \dotsc, W_N\} \subset
\mathbb{C}^{m\times m}$ satisfies $\rho_\infty(\mathcal W) = 1$, then
\begin{equation}\label{eq:complexweights}
\lambda_{\mathcal W} \leq \ell_{2m}.
\end{equation}
To prove this claim, for $W \in \mathbb{C}^{m\times m}$ we let
\begin{equation*}
T_W = \begin{bmatrix}
\real W&-\imag W\\\imag W& \real W
\end{bmatrix} \in \mathbb{R}^{(2m) \times (2m)}, 
\end{equation*}
where $\real W$ and $\imag W$ denotes the real and imaginary parts of $W$,
respectively. The multiplicative property $T_{WW'} = T_W T_{W'}$ and the
identity $\norm{T_W} = \norm{W}$ yield
\begin{equation}\label{eq:JSR:TW}
\rho_\infty(T_{W_1}, \dotsc, T_{W_N}) = 1. 
\end{equation}
Also, {since} $\rho(W) = \rho(T_W)$, we can show that 
\begin{equation*}
\begin{aligned}
\lambda_{\mathcal W}(\mathcal M)
&=
\rho\left(
T_{N^{-1}\sum_{i=1}^N W_i \otimes A_i } 
\right)
\\
&=
\rho\left(\frac{1}{N}
\begin{bmatrix}
\sum_{i=1}^N (\real W_i) \otimes A_i & \sum_{i=1}^N -(\imag W_i) \otimes A_i
\\
\sum_{i=1}^N (\imag W_i) \otimes A_i & \sum_{i=1}^N (\real W_i) \otimes A_i
\end{bmatrix}
\right)
\\
&=
\rho\left(\frac{1}{N}\sum_{i=1}^N
T_{W_i} \otimes A_i\right)
\\
&=
\lambda_{\{T_{W_1}, \dotsc, T_{W_N}\}}(\mathcal M).
\end{aligned}
\end{equation*}
This equation and \eqref{eq:JSR:TW} prove Inequality~\eqref{eq:complexweights}.
\end{remark}

\section{The Markovian case}\label{sec:markov}

In this section, we extend the results presented in the last section to the
Markovian case. Let $\sigma = \{\sigma(k)\}_{k=1}^\infty$ be a time-homogeneous
Markov chain with a state space~$\{1, \dotsc, N\}$ and a transition probability
matrix~$Q\in \mathbb{R}^{N\times N}$. We define the Markovian version of the
$p$-radius as follows.

\begin{definition}\label{defn:}
Let $X(\cdot;\mu)$ denote the trajectory of the Markov jump linear system
\begin{equation*}
\Sigma:  X(k+1) = A_{\sigma(k+1)}X(k),\ X(0) = I,\,\sigma(1) \sim \mu, 
\end{equation*}
where $\mu$ is an arbitrary probability distribution on $\{1, \dotsc, N\}$. The
{\it $L^p$-norm Markovian joint spectral radius} ({\it Markovian $p$-radius} for
short) of the pair $(\mathcal M, Q)$ is defined by
\begin{equation*}
\rho_p(\mathcal M, Q)
=
\sup_{\mu} \limsup_{k\to\infty}\left(E[\norm{X(k;\mu)}^p]^{1/(pk)}\right). 
\end{equation*}
\end{definition}

\begin{remark}
The logarithm of the Markovian $p$-radius corresponds to a quantity called
Lyapunov exponent of the $p$th mean \cite[p.~307]{Fang1995}.
\end{remark}

We will later see that this definition coincides with the originally defined
$p$-radius in the case where the matrices~$A_{\sigma(k)}$ are independent and
uniformly distributed at every time step. Moreover, as can be naturally
expected, the Markovian $p$-radius has a close connection with the mean
stability of $\Sigma$, which is defined as follows. We say that $\Sigma$ is {\it
$p$th mean stable} if there exist $C>0$ and $\gamma \in [0, 1)$ such that
\eqref{eq:def:Sigma:pth-stbl} holds for all $k$ and $\mu$. The next proposition
is an immediate consequence from the definition of the Markovian $p$-radius.

\begin{proposition}\label{prop:markovstblchar}
$\Sigma$ is $p$th mean stable if and only if $\rho_p(\mathcal M, Q) < 1$. 
\end{proposition}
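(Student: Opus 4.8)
The plan is to treat the forward implication as essentially immediate and to concentrate on the converse, where the only genuine difficulty is that $p$th mean stability demands a single pair $(C,\gamma)$ that works \emph{simultaneously} for every initial distribution $\mu$, whereas the definition of $\rho_p(\mathcal M, Q)$ controls each $\mu$ separately through an inner $\limsup$. The device that overcomes this obstacle is the observation that, writing $g_k(\mu)=E[\norm{X(k;\mu)}^p]$, the map $\mu\mapsto g_k(\mu)$ is linear, which collapses the supremum over the infinite family of distributions to a maximum over the $N$ point masses and hence makes the stability constants uniform for free.

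First I would spell out this linearity. Since $\sigma$ is a Markov chain with transition matrix $Q$ and $\sigma(1)\sim\mu$, the joint law of $(\sigma(1),\dots,\sigma(k))$ assigns to the path $(i_1,\dots,i_k)$ the weight $\mu_{i_1}Q_{i_1 i_2}\cdots Q_{i_{k-1}i_k}$, which is a linear function of the vector $\mu$. Consequently $g_k(\mu)=\sum_{j=1}^N \mu_j\,g_k(\delta_j)$, where $\delta_j$ is the point mass at state $j$, so that $g_k(\mu)\le\max_j g_k(\delta_j)$ for every $\mu$, with equality attained at some point mass. Taking $(pk)$th roots, and using that the $\limsup$ of a pointwise maximum of \emph{finitely many} sequences equals the maximum of their individual $\limsup$'s (the only auxiliary fact needing a short argument, valid because the threshold defining a $\limsup$ can be chosen common to finitely many indices), I would arrive at the clean identity
\[
\rho_p(\mathcal M, Q)=\max_{1\le j\le N}\ \limsup_{k\to\infty} g_k(\delta_j)^{1/(pk)} ,
\]
which reduces the whole statement to the $N$ point-mass trajectories.

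With this identity in hand both directions are routine. For the converse, assuming $\rho_p(\mathcal M,Q)<1$ I would fix $\gamma$ with $\rho_p(\mathcal M,Q)<\gamma<1$; for each of the finitely many $j$ there is a threshold $K_j$ beyond which $g_k(\delta_j)\le\gamma^{pk}$, and since only finitely many earlier terms $g_k(\delta_j)$ (those with $k<\max_j K_j$) remain, a single constant $C\ge 1$ absorbs them so that $g_k(\delta_j)\le C\gamma^{pk}$ for all $k$ and $j$; the bound $g_k(\mu)\le\max_j g_k(\delta_j)$ then yields $E[\norm{X(k;\mu)}^p]\le C\gamma^{pk}$ uniformly in $\mu$, i.e.\ $p$th mean stability. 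The forward implication needs no such care: if $E[\norm{X(k;\mu)}^p]\le C\gamma^{pk}$ for all $k$ and $\mu$, then $g_k(\mu)^{1/(pk)}\le C^{1/(pk)}\gamma\to\gamma$, so every inner $\limsup$, and hence their supremum $\rho_p(\mathcal M,Q)$, is at most $\gamma<1$. As anticipated, the crux is the uniformity over $\mu$, and the linearity of $g_k$ in $\mu$ is exactly what disposes of it.
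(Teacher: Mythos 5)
Your proposal is correct and follows essentially the same route as the paper: the paper's proof likewise treats necessity as obvious and, for sufficiency, uses the decomposition $E[\norm{X(k;\mu)}^p]=\sum_{i=1}^N \mu_i E[\norm{X(k;\delta_i)}^p]$ together with thresholds $K_i$ for each point mass to obtain a bound uniform in $\mu$. Your explicit absorption of the finitely many initial terms into the constant $C$ is a slightly more careful rendering of the same argument.
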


\begin{proof}
The necessity is obvious. Let us prove the sufficiency. Assume that
$\rho_p(\mathcal M, Q) $ $< 1$. Then there exists a $\gamma \in [0, 1)$ such
that, for every \mbox{$i\in \{1, \dotsc, N\}$}, we have $\limsup_{k\to\infty}
(E[\norm{X(k;\delta_i)}^p]^{1/(pk)}) < \gamma$, where $\delta_i$ is the
probability distribution on $\{1, \dotsc, N\}$ such that $\delta_i(\{i\}) = 1$.
Then, for each $i$ there exists a positive integer $K_i$ such that, if $k>K_i$,
then $E[\norm{X(k;\delta_i)}^p] < \gamma^{pk}$. Therefore, for an arbitrary
$\mu$, if $k>\max(K_1, \dotsc, K_N)$ then $E[\norm{X(k;\mu)}^p] = \sum_{i=1}^N
\mu_i E[\norm{X(k;\delta_i)}^p] \leq \sum_{i=1}^N \mu_i \gamma^{pk} =
\gamma^{pk}$. This implies that $\Sigma$ is $p$th mean stable.
\end{proof}

The main result in this section is stated in the following theorem, which can be
used to compute upper and lower bounds on the Markovian $p$-radius.

\begin{theorem}\label{thm:main:Markov}
\begin{enumerate}
\item The sequence $\{h_k(\mathcal M, Q)\}_{k=1}^\infty$ defined by 
\begin{equation*}
h_k(\mathcal M, Q) 
= 
\left(
\sum_{i_1,\dotsc,i_k =1}^N 
q_{i_1,i_2} \dotsm q_{i_{k-1},i_k}
\norm{A_{i_k} \dotsm A_{i_1}}^p\right)^{1/(pk)}
\end{equation*}
is decreasing. Moreover $\rho_p(\mathcal M, Q) = \lim_{k\to\infty} h_k(\mathcal M, Q)$. 

\item For $\bar{\mathcal W} = \{W_{ij}\}_{1\leq i, j\leq N} \subset
\mathbb{R}^{m\times m}$ define
\begin{equation*}
{\mathcal A}_{\bar{\mathcal W}} = 
\begin{bmatrix}
q_{11} W_{11}\otimes A_1 & \cdots & q_{N1}W_{N1}\otimes A_N
\\
\vdots &\ddots&\vdots
\\
q_{1N} W_{1N}\otimes A_1 & \cdots & q_{NN}W_{NN}\otimes A_N
\end{bmatrix}. 
\end{equation*}
If $\rho_\infty(\bar{\mathcal W}) = 1$, then $\rho_{1}(\mathcal M, Q) \geq
\rho({\mathcal A}_{\bar{\mathcal W}})$.
\end{enumerate}
\end{theorem}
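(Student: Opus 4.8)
The plan is to prove the two assertions in the same spirit as Proposition~\ref{prop:ease} and Theorem~\ref{thm:main}, with the transition matrix $Q$ entering through the Markov factorization of path probabilities and the row-stochasticity $\sum_{j=1}^N q_{ij} = 1$. For the first assertion I would start from the observation that, writing $\delta_i$ for the point mass at $i$, one has $h_k(\mathcal M, Q)^{pk} = \sum_{i=1}^N E[\norm{X(k;\delta_i)}^p]$, since the weight $q_{i_1 i_2}\dotsm q_{i_{k-1} i_k}$ is exactly the probability of the trajectory $i_1, \dotsc, i_k$ started from $\delta_{i_1}$. Because $E[\norm{X(k;\mu)}^p]$ is linear in $\mu$, its supremum over probability vectors is attained at a vertex $\delta_i$, so $\max_i E[\norm{X(k;\delta_i)}^p] \le h_k^{pk} \le N\max_i E[\norm{X(k;\delta_i)}^p]$. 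Taking $1/(pk)$-th powers and letting $k\to\infty$ makes the harmless factor $N^{1/(pk)}\to 1$, so once convergence of $h_k$ is known its limit coincides with $\sup_\mu \limsup_k E[\norm{X(k;\mu)}^p]^{1/(pk)} = \rho_p(\mathcal M, Q)$.

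To obtain convergence and monotonicity I would establish the submultiplicative inequality $f_{k+l} \le f_k f_l$ for $f_k := h_k^{pk}$. Splitting each length-$(k+l)$ product at time $k$, sub-multiplicativity of the norm gives $\norm{A_{i_{k+l}}\dotsm A_{i_1}}^p \le \norm{A_{i_{k+l}}\dotsm A_{i_{k+1}}}^p\norm{A_{i_k}\dotsm A_{i_1}}^p$, while the path weight factors as $(q_{i_1 i_2}\dotsm q_{i_{k-1}i_k})\,q_{i_k i_{k+1}}\,(q_{i_{k+1}i_{k+2}}\dotsm q_{i_{k+l-1}i_{k+l}})$; summing out the linking index $i_{k+1}$ via $\sum_{i_{k+1}} q_{i_k i_{k+1}} = 1$ and bounding the tail block by its maximum over starting states decouples the two factors and yields $f_{k+l}\le f_k f_l$. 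By Fekete's lemma this already gives $\lim_k h_k = \inf_k f_k^{1/(pk)} = \rho_p(\mathcal M,Q)$. The decreasing claim itself I would then obtain by mirroring the argument recorded for the i.i.d.\ case in Proposition~\ref{prop:ease}; the only new ingredient is the row-stochasticity used above to absorb the intermediate transition weight.

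For the second assertion I would follow the template of the proof of Theorem~\ref{thm:main}. By Gelfand's formula $\rho(\mathcal A_{\bar{\mathcal W}}) = \lim_k \norm{\mathcal A_{\bar{\mathcal W}}^k}^{1/k}$, so I would compute the $k$-th power block by block: its $(r,c)$ block is a sum over intermediate states $r_1,\dotsc,r_{k-1}$ of products of blocks $q_{cr}W_{cr}\otimes A_c$, and the mixed-product rule \eqref{eq:ABCD} collapses each such product into $(\text{product of }q\text{'s})\cdot(\text{product of }W\text{'s})\otimes(\text{product of }A\text{'s})$, the products running along the path and the scalar factor being precisely a path weight. Bounding the block-matrix norm by the sum of the norms of its blocks and applying \eqref{eq:|AoxB|} and the triangle inequality gives $\norm{\mathcal A_{\bar{\mathcal W}}^k} \le \bigl(\max \norm{W_{\cdots}\dotsm W_{\cdots}}\bigr)\sum_{\text{paths}} (\text{path weight})\,\norm{A_{\cdots}\dotsm A_{\cdots}}$. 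The trailing sum is exactly $h_k(\mathcal M,Q)^k$ with $p=1$, which tends to $\rho_1(\mathcal M,Q)$ by the first assertion, and the leading factor is a maximum over path-constrained products of $\bar{\mathcal W}$; since these form a sub-collection of all products of $\bar{\mathcal W}$, its $k$-th root is bounded above by a quantity tending to $\rho_\infty(\bar{\mathcal W}) = 1$. Passing to the limit then yields $\rho(\mathcal A_{\bar{\mathcal W}}) \le \rho_\infty(\bar{\mathcal W})\,\rho_1(\mathcal M,Q) = \rho_1(\mathcal M,Q)$, as required.

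The main obstacle I anticipate is twofold. In the first assertion, upgrading mere convergence—which the submultiplicative estimate delivers for free—to genuine term-by-term monotonicity requires reproducing the finer i.i.d.\ argument while carrying the transition weights, and one must verify that row-stochasticity indeed lets the linking index be summed out without loss. In the second assertion, the delicate bookkeeping lies in correctly identifying the scalar path weight and the orientation of the $W$- and $A$-products produced by expanding $\mathcal A_{\bar{\mathcal W}}^k$, and in justifying that the block-matrix norm is controlled by the sum of block norms; the control of the $W$-products by $\rho_\infty(\bar{\mathcal W})$ is, by contrast, the easy upper-bound direction, since path-constrained products are a subset of all products.
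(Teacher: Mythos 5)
Your route is genuinely different from the paper's. The paper first lifts the Markovian problem to an i.i.d.\ one: it introduces the family $\bar{\mathcal M}_p=\{B_{ij}^{(p)}\}$ with $B_{ij}^{(p)}=N^{2/p}q_{ij}^{1/p}A_i\otimes(e_je_i^\top)$, proves $\rho_p(\mathcal M,Q)=\rho_p(\bar{\mathcal M}_p)$ together with the identity $h_k(\mathcal M,Q)=h_k(\bar{\mathcal M}_p)$, and then reads off both assertions from Proposition~\ref{prop:ease} and Theorem~\ref{thm:main} applied to $\bar{\mathcal M}_p$ (for assertion~2, a Kronecker commutation similarity identifies the resulting matrix with $\mathcal A_{\bar{\mathcal W}}$). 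You instead work directly with the Markov-weighted sums. For assertion~2 your direct expansion of $\mathcal A_{\bar{\mathcal W}}^k$ is sound: the blocks collapse via \eqref{eq:ABCD} into path-weighted products, the norm of the block matrix is dominated by the sum of the block norms, and after summing the terminal block index out by row-stochasticity the trailing sum is exactly $h_k(\mathcal M,Q)^k$; combined with your identification $h_k(\mathcal M,Q)^{pk}=\sum_i E[\norm{X(k;\delta_i)}^p]$ and the sandwich between $\max_i$ and $N\max_i$, this yields a complete, self-contained proof of assertion~2 and of the limit statement in assertion~1 (granting convergence, which your submultiplicativity $f_{k+l}\le f_kf_l$ plus Fekete's lemma does deliver).

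The genuine gap is the monotonicity claim in assertion~1. Subadditivity and Fekete give convergence of $f_k^{1/k}$ to its infimum but do not give that the sequence is term-by-term decreasing, and you defer that step to ``mirroring the i.i.d.\ argument.'' That argument is not available to mirror: Proposition~\ref{prop:ease}(1) is a nontrivial result cited from \cite{Jungers2011b}, not a one-line consequence of submultiplicativity (note that $f_{k+1}\le f_k f_1$ implies $f_{k+1}^{1/(k+1)}\le f_k^{1/k}$ only if $f_1\le f_k^{1/k}$, which is the wrong direction), so you would have to reconstruct it from scratch and additionally verify that it survives the insertion of the transition weights. The paper's lift makes this step vacuous: since $h_k(\mathcal M,Q)$ equals $h_k(\bar{\mathcal M}_p)$ for the i.i.d.\ family $\bar{\mathcal M}_p$, the cited monotonicity applies verbatim with nothing to re-derive. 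If you want to keep your direct route, you must either supply the weighted monotonicity proof in full or adopt the paper's lift for this one step.
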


Let us observe some consequences of Theorem~\ref{thm:main:Markov} before proving
it. First, consider the special case when $\sigma$ is a sequence of independent
and uniformly distributed random variables on $\{1, \dotsc, N\}$. The
corresponding transition probability matrix $Q$ is the $N\times N$ matrix whose
entries are all $1/N$. In this case, we have $h_k(\mathcal M, Q)
=${$(\sum_{i_1,\dotsc,i_k =1}^N ({1/N})^{k-1} \norm{A_{i_k} \dotsm
A_{i_1}}^p)^{1/(pk)}$} $=N^{1/kp}h_k(\mathcal M)$. {This equation implies that
$h_k(\mathcal M, Q)$, the average of the norm of $k$-product $A_{i_k}\dotsm
A_{i_1}$, coincides with the other average $h_k(\mathcal M)$ under the
identification of $\Sigma$ as a switched linear system having independent and
identically distributed system parameters, except the factor $N^{1/kp}$. This
factor, roughly speaking, arises because $h_{k}(\mathcal M, Q)$ does not take
the initial probability distribution of the switching signal $\sigma$ into
account.} {Then, by this equation and} the first claim of
Theorem~\ref{thm:main:Markov}, taking the limit {as} $k\to\infty$ shows that
$\rho_p(\mathcal M, Q) = \rho_p(\mathcal M)$. Hence Definition~\ref{defn:}
indeed recovers the original $p$-radius.

Next, as a corollary of Theorem~\ref{thm:main:Markov}, we can recover a lower
bound of the Markovian $p$-radius implicitly presented in~\cite{Barthelemy2013a}.

\begin{corollary}[\cite{Barthelemy2013a}]
Define
\begin{equation*} \label{eq:def:ellm(M,Q)}
\ell_m(\mathcal M, Q)
= 
{\sup} \left\{
\rho({\mathcal A}_{\bar{\mathcal W}}) : \bar{\mathcal W} \in (\mathbb{R}^{m\times m})^{N^2},\ \rho_\infty(\bar {\mathcal W}) = 1
\right\}.
\end{equation*}
Then, $\ell_1(\mathcal M, Q) \leq \rho_{1}(\mathcal M, Q)$.
\end{corollary}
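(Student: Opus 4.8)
The plan is to show that $\ell_1(\mathcal{M}, Q)$ is a special case of the lower bound furnished by the second assertion of Theorem~\ref{thm:main:Markov}. The corollary sets $m = 1$, so each weight $W_{ij}$ is a scalar, and the requirement $\rho_\infty(\bar{\mathcal W}) = 1$ must be understood for a family of $1 \times 1$ matrices. The key observation is that the second assertion of Theorem~\ref{thm:main:Markov} already guarantees $\rho_1(\mathcal{M}, Q) \geq \rho(\mathcal{A}_{\bar{\mathcal W}})$ for every such $\bar{\mathcal W}$ satisfying $\rho_\infty(\bar{\mathcal W}) = 1$. Since $\ell_1(\mathcal{M}, Q)$ is defined precisely as the supremum of $\rho(\mathcal{A}_{\bar{\mathcal W}})$ over exactly this feasible set, the inequality $\ell_1(\mathcal{M}, Q) \leq \rho_1(\mathcal{M}, Q)$ should follow directly by taking the supremum on the right-hand side of the bound from Theorem~\ref{thm:main:Markov}.

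The main step I would carry out is therefore to invoke the second claim of Theorem~\ref{thm:main:Markov} at $m = 1$ as a pointwise inequality valid for each admissible weight family, and then pass to the supremum. Concretely, for any $\bar{\mathcal W} \in (\mathbb{R}^{1\times 1})^{N^2}$ with $\rho_\infty(\bar{\mathcal W}) = 1$, the theorem gives $\rho(\mathcal{A}_{\bar{\mathcal W}}) \leq \rho_1(\mathcal{M}, Q)$. The right-hand side does not depend on $\bar{\mathcal W}$, so it is an upper bound for the entire set $\{\rho(\mathcal{A}_{\bar{\mathcal W}}) : \rho_\infty(\bar{\mathcal W}) = 1\}$; hence its supremum, which is $\ell_1(\mathcal{M}, Q)$, is also at most $\rho_1(\mathcal{M}, Q)$.

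The one subtlety I would want to address is whether the scalar case $m=1$ genuinely lies within the hypotheses of Theorem~\ref{thm:main:Markov}, and in particular how $\rho_\infty$ is interpreted for a family of scalars. A scalar $w$ may be viewed as the $1\times 1$ matrix $[w]$, for which $\norm{[w]} = \abs{w}$, so $\rho_\infty$ of a family of scalars is computed exactly as the joint spectral radius of $1\times 1$ matrices, and no special treatment is required. I would also note in passing, to connect with the reference, that in the scalar case the block matrix $\mathcal{A}_{\bar{\mathcal W}}$ reduces to the entrywise product of $Q^\top$ with the weighted coefficients times the $A_i$ blocks, which is the form in which the bound implicitly appears in~\cite{Barthelemy2013a}; this reconciliation is what justifies attributing the corollary to that reference.

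I do not anticipate a substantive obstacle: the entire argument is a one-line specialization followed by a supremum, and the only care needed is in correctly reading the $m=1$ instance of the already-proved theorem. The harder conceptual content lives in Theorem~\ref{thm:main:Markov} itself, whose proof has not yet appeared in the excerpt; once that theorem is available, the corollary is immediate.
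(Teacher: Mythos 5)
Your proposal is correct and matches the paper's (implicit) argument exactly: the corollary is obtained by specializing the second claim of Theorem~\ref{thm:main:Markov} to $m=1$ and taking the supremum over the admissible scalar weight families, with $\rho_\infty$ of scalars read as the joint spectral radius of $1\times 1$ matrices. The paper offers no further proof, so there is nothing missing in your treatment.
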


Before we prove Theorem~\ref{thm:main:Markov}, we illustrate its use with an example.

\begin{example}
Consider the Markov jump linear system~$\Sigma$ with
\begin{equation*}
\begin{gathered}
A_1  =
\begin{bmatrix}
0.77&0.80\\ -0.60&0.87
\end{bmatrix}
,\ 
A_2 = 
\begin{bmatrix}
-0.77&0.83\\ -0.70&-0.70
\end{bmatrix}
,
\\
Q = 
\begin{bmatrix}
0.70&0.30\\ 0.43&0.57
\end{bmatrix}. 
\end{gathered}
\end{equation*}
A brute force search indicates that $\ell_{1}(\mathcal M, Q) = 0.844$ with the
weights $W_{11} = 1$, $W_{12} = 1$, $W_{21} = -1$, and $W_{22} = 0.932$. On the
other hand, using a MATLAB implementation~\cite{Burke} of the stochastic
gradient descent algorithm used in Example~\ref{ex:}, we locally maximize
$\ell_{2}(\mathcal M, Q)$ over the set $\mathfrak O_2$ of matrix weights to find
the set $\bar{\mathcal W}$ consisting of the following matrices:
\begin{equation*}
\begin{gathered}
W_{11}  =
\begin{bmatrix}
-0.412&-0.911
\\ 
0.911&-0.412
\end{bmatrix}
,\ 
W_{12} = 
\begin{bmatrix}
0.839&-0.544
\\ 
0.544&-0.839
\end{bmatrix}
, \\
W_{21}  =
\begin{bmatrix}
-0.204&-0.979
\\ 
0.979&-0.204
\end{bmatrix}
,\ 
W_{22} = 
\begin{bmatrix}
0.937&-0.349
\\ 
0.349&-0.937
\end{bmatrix}.
\end{gathered} 
\end{equation*}
Since $\rho(\mathcal A_{\bar{\mathcal W}}) = 1.067$, we conclude
$\rho_1(\mathcal M, Q) \geq 1.067$. This proves that the corresponding Markov
jump linear system is unstable.
\end{example}

The rest of this section is devoted to the proof of
Theorem~\ref{thm:main:Markov}. In our proof, we employ a reduction of the
Markovian $p$-radius to the original $p$-radius, for which we can apply the
results obtained in Section~\ref{sec:NovelLBB}.

\begin{proposition}\label{prop:markov:char}
Let $p$ be a positive integer. For each $1\leq i\leq N$, let $e_i$ denote the
$i$th vector in the canonical basis of $\mathbb{R}^N$. Define the set of
matrices $\bar{\mathcal M}_p = \{B_{ij}^{(p)}\}_{i, j\in \{1, \dotsc, N\}}$ as
\begin{equation*}
B_{ij}^{(p)} = N^{2/p} q_{ij}^{1/p} A_i\otimes (e_j e_i^\top). 
\end{equation*}
Then,
\begin{equation}\label{eq:prop:markov:char}
\rho_p(\mathcal M, Q) = \rho_p(\bar {\mathcal M}_p). 
\end{equation}
\end{proposition}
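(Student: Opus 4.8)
The plan is to show that both sides of~\eqref{eq:prop:markov:char} equal $\lim_{k\to\infty} h_k(\mathcal M, Q)$, the (decreasing) sequence appearing in Theorem~\ref{thm:main:Markov}, treating the two sides by separate arguments and joining them at the end. Crucially, I would establish each equality \emph{without} invoking assertion~1 of Theorem~\ref{thm:main:Markov}, since the present proposition is the tool used to prove that theorem.

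First I would analyze the right-hand side $\rho_p(\bar{\mathcal M}_p)$ by computing a generic length-$k$ product of the lifted matrices. Applying the Kronecker identity~\eqref{eq:ABCD} repeatedly gives
\[
B_{i_kj_k}^{(p)}\dotsm B_{i_1j_1}^{(p)} = N^{2k/p}\Bigl(\prod_{l=1}^k q_{i_lj_l}\Bigr)^{\!1/p}(A_{i_k}\dotsm A_{i_1})\otimes\bigl[(e_{j_k}e_{i_k}^\top)\dotsm(e_{j_1}e_{i_1}^\top)\bigr].
\]
Since $e_{i_l}^\top e_{j_{l-1}}=\delta_{i_l,j_{l-1}}$, the outer-product factor telescopes: it equals $e_{j_k}e_{i_1}^\top$ when the index tuple is \emph{consistent}, meaning $i_l=j_{l-1}$ for $2\le l\le k$, and vanishes otherwise. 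Hence, using~\eqref{eq:|AoxB|} and $\norm{e_{j_k}e_{i_1}^\top}=1$, each consistent product has norm $N^{2k/p}(\prod_l q_{i_lj_l})^{1/p}\norm{A_{i_k}\dotsm A_{i_1}}$, while all remaining products are zero.

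Next I would sum the $p$th powers over all $N^{2k}$ index tuples. Imposing the consistency constraints $j_l=i_{l+1}$ ($1\le l\le k-1$) leaves $i_1,\dotsc,i_k$ and the terminal index $j_k$ free; the factor $N^{2k}=(N^2)^k$ cancels the normalization in the definition of the $p$-radius, and row-stochasticity $\sum_{j_k}q_{i_kj_k}=1$ collapses the free terminal index, giving
\[
\frac{1}{(N^2)^k}\sum_{\text{tuples}}\norm{B_{i_kj_k}^{(p)}\dotsm B_{i_1j_1}^{(p)}}^p=\sum_{i_1,\dotsc,i_k}q_{i_1i_2}\dotsm q_{i_{k-1}i_k}\norm{A_{i_k}\dotsm A_{i_1}}^p=h_k(\mathcal M,Q)^{pk}.
\]
By the definition of the ordinary $p$-radius this yields $\rho_p(\bar{\mathcal M}_p)=\lim_{k\to\infty}h_k(\mathcal M,Q)$, the limit existing because the defining sequence of the $p$-radius is a genuine limit (assertion~1 of Proposition~\ref{prop:ease}, applied to the $N^2$-element family $\bar{\mathcal M}_p$).

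For the left-hand side I would expand $E[\norm{X(k;\mu)}^p]=\sum_{i_1,\dotsc,i_k}\mu_{i_1}q_{i_1i_2}\dotsm q_{i_{k-1}i_k}\norm{A_{i_k}\dotsm A_{i_1}}^p$, which is linear in $\mu$ and satisfies $E[\norm{X(k;\mu)}^p]=\sum_i\mu_iE[\norm{X(k;\delta_i)}^p]$ with $\sum_iE[\norm{X(k;\delta_i)}^p]=h_k(\mathcal M,Q)^{pk}$. The bounds $\mu_i\le1$ and $\sum_iE[\norm{X(k;\delta_i)}^p]\le N\max_iE[\norm{X(k;\delta_i)}^p]$ then sandwich the two quantities up to the factors $N^{\pm1/(pk)}\to1$. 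The main obstacle is that $\rho_p(\mathcal M,Q)$ is defined as $\sup_\mu\limsup_k$, whereas this comparison naturally produces $\limsup_k\sup_\mu$, and these do not agree in general. I would resolve it with the elementary fact that for finitely many bounded sequences $\limsup_k\max_i c_k^{(i)}=\max_i\limsup_k c_k^{(i)}$ (one direction is termwise domination, the other fixes the maximizing index along a subsequence by the pigeonhole principle). Applying this with $c_k^{(i)}=(E[\norm{X(k;\delta_i)}^p])^{1/(pk)}$ shows $\sup_\mu\limsup_k(E[\norm{X(k;\mu)}^p])^{1/(pk)}=\max_i\limsup_k c_k^{(i)}=\limsup_k h_k(\mathcal M,Q)=\lim_k h_k(\mathcal M,Q)$, the final equality using that the limit was already shown to exist. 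Combining the two sides then gives~\eqref{eq:prop:markov:char}.
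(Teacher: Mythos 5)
Your proposal is correct, and its computational core is the same as the paper's: the identity
\begin{equation*}
\frac{1}{N^{2k}}\sum_{i_1,j_1,\dotsc,i_k,j_k}\bigl\lVert B_{i_kj_k}^{(p)}\dotsm B_{i_1j_1}^{(p)}\bigr\rVert^p=\sum_{i_1,\dotsc,i_k}q_{i_1i_2}\dotsm q_{i_{k-1}i_k}\lVert A_{i_k}\dotsm A_{i_1}\rVert^p,
\end{equation*}
which you obtain by telescoping the $e_je_i^\top$ factors, is exactly the quantity the paper computes as $E[\lVert\bar X(k)\rVert^p]$ for an auxiliary i.i.d.\ switched system $\bar\Sigma_p$ driven by the lifted matrices. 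The logical wrapper around that identity, however, is genuinely different. The paper argues \emph{indirectly}: it first reduces the equality of the two radii to the equivalence of the strict inequalities $\rho_p(\mathcal M,Q)<1$ and $\rho_p(\bar{\mathcal M}_p)<1$ via a scaling/homogeneity argument, and then deduces that equivalence from the mean-stability characterizations (Proposition~\ref{prop:markovstblchar} for $\Sigma$ and the i.i.d.\ characterization for $\bar\Sigma_p$). You instead identify both radii \emph{directly} with $\lim_{k\to\infty}h_k(\mathcal M,Q)$, which forces you to confront the $\sup_\mu\limsup_k$ versus $\limsup_k\sup_\mu$ issue head-on; your resolution via the interchange $\limsup_k\max_i c_k^{(i)}=\max_i\limsup_k c_k^{(i)}$ for finitely many sequences is correct (the paper hides the analogous finitely-many-initial-distributions argument inside the proof of Proposition~\ref{prop:markovstblchar}). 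Your route avoids the scaling trick and any appeal to the stability characterizations, and as a bonus it delivers assertion~1 of Theorem~\ref{thm:main:Markov} ($\rho_p(\mathcal M,Q)=\lim_k h_k(\mathcal M,Q)$) along the way rather than as a separate consequence; the paper's route is shorter on the analysis side because it only ever compares radii to the threshold $1$. Both are valid proofs of the proposition.
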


\begin{proof}
{We first claim that, for the proof of \eqref{eq:prop:markov:char}, it is
sufficient to show that}
\begin{equation}\label{eq:equiv}
[\text{$\rho_p(\mathcal M, Q) < 1$}]
\ \Leftrightarrow \ 
\text{[$\rho_p(\bar{\mathcal M}_p) < 1$],}
\end{equation}
due to the following reason. Suppose that \eqref{eq:prop:markov:char} does not
hold while \eqref{eq:equiv} is true. Then, we have either $\rho_p(\mathcal M, Q)
> \rho_p(\bar {\mathcal M}_p)$ or $\rho_p(\mathcal M, Q) < \rho_p(\bar {\mathcal
M}_p)$. If the former inequality holds, then one can find a $c>0$ such that the
matrix family $c\mathcal M = \{cA_i\}_{i=1}^N$ satisfies $\rho_p(c\mathcal M, Q)
> 1 > \rho_p(c \bar {\mathcal M}_p)= \rho_p( \overline{c \mathcal M}_p)$.
However, this cannot be true by \eqref{eq:equiv}. In a similar way, we can also
show that the latter inequality, $\rho_p(\mathcal M, Q) < \rho_p(\bar {\mathcal
M}_p)$, cannot hold. Therefore, \eqref{eq:prop:markov:char} must be true.

{Consequently, it suffices to prove \eqref{eq:equiv}.} In order to prove the
claim in~\eqref{eq:equiv}, we introduce an alternative switched linear system
with independent and identically distributed jumping parameters. Let
$\{\theta(k)\}_{k=1}^\infty$ and $\{\phi(k)\}_{k=1}^\infty$ be independent
random variables uniformly distributed on $\{1, \dotsc, N\}$. Define the
switched linear system $\bar \Sigma_p$ by
\begin{equation*}
\bar \Sigma_p
: 
\bar X(k+1) = B_{\theta(k+1), \phi(k+1)}^{(p)} \bar X(k),\ \bar X(0) = I.
\end{equation*}
From \eqref{eq:iidstblchar}, we see that $\rho_p(\bar{\mathcal M}_p) < 1$ if and
only if $\bar{\Sigma}_p$ is $p$th mean stable. Moreover, from
Proposition~\ref{prop:markovstblchar}, we know that $\rho_p(\mathcal M, Q) < 1$
if and only if $\Sigma$ is $p$th mean stable. Therefore, to prove
\eqref{eq:equiv}, we need to show that $\Sigma$ is $p$th mean stable if and only
if $\bar \Sigma_p$ is $p$th mean stable.

To prove the equivalence of stability, let us first compute $E[\norm{\bar
X(k)}^p]$. By the definition of $\bar{\Sigma}_p$ and Equation \eqref{eq:ABCD},
we can compute $\bar X(k)$ as
\begin{equation*}
\begin{aligned}
\bar X(k)
&=
\prod_{i=1}^k \left(
N^{2/p} q_{\theta(i), \phi(i)}^{1/p} A_{\theta(i)} \otimes  (e_{\phi(i)} e_{\theta(i)}^\top)
\right)
\\
&=
N^{2k/p} 
\left(q_{\theta(1),\phi(1)} \dotsm q_{\theta(k),\phi(k)}\right)^{1/p} 
(A_{\theta(k)} \dotsm A_{\theta(1)})\otimes J_k, 
\end{aligned}
\end{equation*}
where the symbol $\prod_{i=1}^k$ denotes the left product of matrices and also
$J_k = e_{\phi(k)}e_{\theta(k)}^\top \dotsm e_{\phi(1)} e_{\theta(1)}^\top$.
Since the vectors $e_1, \dotsc, e_N$ are orthonormal, $\norm{J_k} = 1$ if
\begin{equation}\label{eq:event}
\phi(i) = \theta(i+1),\ i=1, \dotsc, k-1,
\end{equation}
and otherwise $\norm{J_k} = 0$. Therefore, if we denote by $\chi$ the
characteristic function of the event~\eqref{eq:event}, using \eqref{eq:|AoxB|}
we obtain
\begin{equation*}
\norm{\bar X(k)}^p 
= 
\chi N^{2k} q_{\theta(1),\theta(2)} \dotsm q_{\theta(k-1),\theta(k)} q_{\theta(k),\phi(k)} 
\norm{A_{\theta(k)} \dotsm A_{\theta(1)}}^p.
\end{equation*}
Since the event in \eqref{eq:event} occurs with probability $1/N^{k-1}$, we obtain
\begin{equation}\label{eq:E[normY(k)^p]}
\begin{aligned}
&E[\norm{\bar X(k)}^p] 
\\
=&\,
\frac{N^{2k}}{N^{k-1}}
E\left[q_{\theta(1),\theta(2)} \dotsm q_{\theta(k-1),\theta(k)} q_{\theta(k),\phi(k)} 
\norm{A_{\theta(k)} \dotsm A_{\theta(1)}}^p
\right]
\\
=&\,
N^{k+1}\sum_{i_1, \dotsc, i_k, j = 1}^N
\frac{1}{N^{k+1}} 
q_{i_1,i_2} \dotsm q_{i_{k-1},i_k} q_{i_k,j} 
\norm{A_{i_k} \dotsm A_{i_1}}^p
\\
=&\,
\sum_{i_1,\dotsc,i_k =1}^N q_{i_1,i_2} \dotsm q_{i_{k-1},i_k}
\norm{A_{i_k} \dotsm A_{i_1}}^p
\\
=&\,
\sum_{i_1=1}^N E[\norm{X(k; \delta_{i_1})}^p], 
\end{aligned}
\end{equation}
where we used $\sum_{j=1}^N q_{i_k, j} = 1$ to show the third equality.

Now, assume that $\Sigma$ is $p$th mean stable. Then, by
\eqref{eq:E[normY(k)^p]} we have $E[\norm{\bar X(k)}^p]$ $\leq \sum_{i_1=1}^N
C\gamma^{pk} = CN\gamma^{pk}$ for some $C>0$ and $\gamma\in [0, 1)$, and hence
$\bar{\Sigma}_p$ is $p$th mean stable. On the other hand, assume that $\bar
\Sigma_p$ is $p$th mean stable. Then, there exist $C>0$ and $\gamma\in [0, 1)$
such that $E[\norm{\bar X(k)}^p] \leq C\gamma^{pk}$. Therefore,
\eqref{eq:E[normY(k)^p]} shows that $E[\norm{X(k; \delta_{i_1})}^p] \leq
C\gamma^{pk}$ for every $i_1 \in \{1, \dotsc, N\}$. Then, in the same way as the
proof of Proposition~\ref{prop:markovstblchar}, we can conclude the $p$th mean
stability of $\Sigma$. This completes the proof of the theorem.
\end{proof}

\begin{remark}
From Proposition~\ref{prop:markov:char}, we can regard the matrix $B_{ij}^{(p)}$
as an $L^p$-averaged version of the $\Omega$-lift introduced in
\cite{Kozyakin2014}, which is used to generalize the so-called Berger-Wang
formula to a Markovian version of the joint spectral radius. Also, we remark
that considering the auxiliary switched linear system~$\bar \Sigma_p$ with an
extended state space is similar to considering an extended state-variable
consisting of the original state variable and the underlying Markov chain, which
is frequently employed for studying (semi-)Markov jump linear
systems~\cite{Costa2005} (\cite{Ogura2013f}).
\end{remark}

Finally, let us prove Theorem~\ref{thm:main:Markov}.

\begin{proof}[Proof of Theorem~\ref{thm:main:Markov}]
From Equation \eqref{eq:E[normY(k)^p]}, we see that 
\begin{equation*}
h_k(\bar{\mathcal M}_{p})
= E[\norm{\bar X(k)}^p]^{1/(pk)} = h_k(\mathcal M, Q).
\end{equation*}
Therefore, by Proposition~\ref{prop:ease}, the sequence $\{h_k(\mathcal M,
Q)\}_{k=1}^\infty$ is decreasing. Furthermore, it converges to
$\lim_{k\to\infty} h_k(\bar{\mathcal M}_{p}) = \rho_p(\bar {\mathcal M}_p) =
\rho_p(\mathcal M, Q)$ by \eqref{eq:prop:markov:char}. Thus, the first claim is
proved. Let us then prove the second claim. Assume that $\bar{\mathcal W} =
\{W_{ij}\}_{1\leq i, j\leq N} \subset \mathbb{R}^{m\times m}$ satisfies
$\rho_\infty(\bar{\mathcal W}) = 1$. By \eqref{eq:prop:markov:char}, it is
enough to show that $\rho_1(\bar{\mathcal M}_1) \geq \rho({\mathcal
A}_{\bar{\mathcal W}})$. From Theorem~\ref{thm:main} we obtain
\begin{equation}\label{eq:lobbpre}
\begin{aligned}
\rho_1(\bar{\mathcal M}_1)
&\geq
\rho\left(
\frac{1}{N^2}\sum_{i, j =1}^N W_{ij} \otimes B_{ij}^{(1)}\right)
\\
&=
\rho\left(
\sum_{i, j = 1}^N q_{ij} W_{ij} \otimes A_i \otimes (e_j e_i^\top)\right). 
\end{aligned}
\end{equation}
Here we recall that there exists~\cite{Brewer1978} an invertible matrix~$T$
satisfying $C\otimes D = T^{-1}(D\otimes C)T$ for all $C\in \mathbb{R}^{N\times
N}$ and $D\in \mathbb{R}^{(nm)\times (nm)}$. {Consequently, there exists an
invertible matrix $T$ such that} $W_{ij} \otimes A_i \otimes (e_j e_i^\top) =
T^{-1} ((e_j e_i^\top)\otimes W_{ij} \otimes A_i)T$. Therefore, the matrix
appearing in the last term of \eqref{eq:lobbpre} is similar to $\sum_{i, j=1}^N
q_{ij}(e_j e_i^\top) \otimes W_{ij} \otimes A_i$, which in fact equals
${\mathcal A}_{\bar{\mathcal W}}$. Therefore $\rho_1(\bar{\mathcal M}_1) \geq
\rho({\mathcal A}_{\bar{\mathcal W}})$, as desired.
\end{proof}

\section{Conclusion}

This paper proposed novel lower bounds on the $p$-radius of a finite set of
matrices. The obtained lower bound is given by the spectral radius of an average
of the given matrices weighted via Kronecker products. We showed that the
proposed lower bounds theoretically extend and also practically improve the
existing lower bounds. We have also shown the extension of the $p$-radius and
its lower bounds to the Markovian case.

\section*{Acknowledgment}

This work was supported in part by the NSF under grants CNS-1302222 and
IIS-1447470. R.J. is an FNRS Research Associate. His work is supported by the
Communaut\'e fran\c caise de Belgique (ARC), and by the Belgian state (PAI).
Parts of this work were carried out while the first author was visiting the
Institute of ICTEAM (Information and Communication Technologies, Electronics and
Applied Mathematics) at Universit\'e catholique de Louvain. He would like to
acknowledge the generous hospitality of the institute.

\end{document}